\newtheorem{theorem}{Theorem}[section]
\newtheorem{definition}[theorem]{Definition}
\newtheorem{lemma}[theorem]{Lemma}
\newtheorem{example}[theorem]{Example}
\definecolor{pink}{rgb}{1, .75, .8}
\definecolor{lgrey}{gray}{.85}
\def\defineTColor#1#2{%
 \newpsstyle{#1}{%
  fillstyle=vlines,hatchcolor=#2,
  hatchwidth=0.1\pslinewidth,
  hatchsep=1\pslinewidth}%
  }
\begin{document}

\title[Strongly far proximity and hyperspace topology]{Strongly far proximity and hyperspace topology}

\author[J.F. Peters]{J.F. Peters$^{\alpha}$}
\email{James.Peters3@umanitoba.ca, cguadagni@unisa.it}
\address{\llap{$^{\alpha}$\,}Computational Intelligence Laboratory,
University of Manitoba, WPG, MB, R3T 5V6, Canada and
Department of Mathematics, Faculty of Arts and Sciences, Ad\i yaman University, 02040 Ad\i yaman, Turkey}
\author[C. Guadagni]{C. Guadagni$^{\beta}$}
\address{\llap{$^{\beta}$\,}Computational Intelligence Laboratory,
University of Manitoba, WPG, MB, R3T 5V6, Canada and
Department of Mathematics, University of Salerno, via Giovanni Paolo II 132, 84084 Fisciano, Salerno , Italy}
\thanks{The research has been supported by the Natural Sciences \&
Engineering Research Council of Canada (NSERC) discovery grant 185986.}

\subjclass[2010]{Primary 54E05 (Proximity); Secondary 54B20 (Hyperspaces)}

\date{}

\dedicatory{Dedicated to the Memory of Som Naimpally}

\begin{abstract}
This article introduces strongly far proximity $\mathop{\not{\delta}}\limits_{\mbox{\tiny$\doublevee$}}$, 
which is associated with Lodato proximity $\delta$.  A main result in this paper is the introduction of a hit-and-miss topology on $\mbox{CL}(X)$, the hyperspace of nonempty closed subsets of $X$, based on the strongly far proximity.
\end{abstract}

\keywords{Hit-and-Miss Topology, Hyperspaces, Proximity, Strongly Far}

\maketitle

\section{Introduction}
Usually, when we talk about proximities, we mean \textit{Efremovi\v{c} proximities}.  Nearness expressions are very useful and also represent a powerful tool because of the relation existing among \textit{Efremovi\v c proximities}, \textit{Weil uniformities} and $\mbox{T}_2$ compactifications. But sometimes \textit{Efremovi\v c proximities} are too strong. So we want to distinguish between a weaker and a stronger forms of proximity.  For this reason, we consider at first \textit{Lodato proximity} $\delta$ and then, by this, we define a stronger proximity by using the Efremovi\v c property related to proximity.

\section{Preliminaries}
Recall how a \textit{Lodato proximity} is defined~\cite{Lodato1962,Lodato1964,Lodato1966} (see, also, \cite{Naimpally2009,Naimpally1970}).

\begin{definition} 
Let $X$ be a nonempty set. A \textit{Lodato proximity $\delta$} is a relation on $\mathscr{P}(X)$ which satisfies the following properties for all subsets $A, B, C $ of $X$ :
\begin{itemize}
\item[P0)] $A\ \delta\ B \Rightarrow B\ \delta\ A$
\item[P1)] $A\ \delta\ B \Rightarrow A \neq \emptyset $ and $B \neq \emptyset $
\item[P2)] $A \cap B \neq \emptyset \Rightarrow  A\ \delta\ B$
\item[P3)] $A\ \delta\ (B \cup C) \Leftrightarrow A\ \delta\ B $ or $A\ \delta\ C$
\item[P4)] $A\ \delta\ B$ and $\{b\}\ \delta\ C$ for each $b \in B \ \Rightarrow A\ \delta\ C$
\end{itemize}
Further $\delta$ is \textit{separated }, if 
\begin{itemize}
\item[P5)] $\{x\}\ \delta\ \{y\} \Rightarrow x = y$.
\end{itemize}
\end{definition}

\noindent When we write $A\ \delta\ B$, we read $A$ \emph{is near to} $B$ and when we write $A \not \delta B$ we read $A$ \emph{is far from} $B$.
A \emph{basic proximity} is one that satisfies $P0)-P3)$.
\textit{Lodato proximity} or \textit{LO-proximity} is one of the simplest proximities. We can associate a topology with the space $(X, \delta)$ by considering as closed sets the ones that coincide with their own closure, where for a subset $A$ we have
\[
\mbox{cl} A = \{ x \in X: x\ \delta\ A\}.
\]
This is possible because of the correspondence of Lodato axioms with the well-known Kuratowski closure axioms. 

By considering the gap between two sets in a metric space ( $d(A,B) = \inf \{d(a,b): a \in A, b \in B\}$ or $\infty$ if $A$ or $B$ is empty ), Efremovi\v c introduced a stronger proximity called \textit{Efremovi\v c proximity} or \textit{EF-proximity}.  

\begin{definition}
An \emph{EF-proximity} is a relation on $\mathscr{P}(X)$ which satisfies $P0)$ through $P3)$ and in addition 
\[A \not\delta B \Rightarrow \exists E \subset X \hbox{ such that } A \not\delta E \hbox{ and } X\setminus E \not\delta B \hbox{ EF-property.}\]
\end{definition}

A topological space has a compatible EF-proximity if and only if it is a Tychonoff space.

Any proximity $\delta$ on $X$ induces a binary relation over the powerset exp $X,$ usually denoted as $ \  \ll_\delta $ and  named    the  {\it   natural strong inclusion associated with } $\delta,$ by declaring that $ A$ is {\it strongly included} in $B, \ A \ll_{\delta} B, \ $ when $A$ is far from the complement of $B,  \ A \not\delta X \setminus B .$

By strong inclusion the \textit{Efremivi\v c property} for $ \delta $ can be written also as a betweenness property   \

 \centerline {  (EF) \  \   \  \  If $A \ll_{\delta} B,$  then there exists some $C$ such that $A \ll_{\delta} \  C \ll_{\delta} \ B$ .} \  \ \

A pivotal example of \emph{EF-proximity} is the \emph{metric proximity} in a metric space $(X,d)$ defined by 
\[
A\ \delta\ B \Leftrightarrow d(A,B) = 0.
\]
That is, $A$ and $B$  {\it either intersect or are asymptotic}: for each natural number $n$ there  is a  point  $a_n$ in $A $ and a point  $b_n$ in $B$ such that $d(a_{n},b_{n}) < \frac{1}{n}$.\\

\subsection{Hit and far-miss topologies}
Let $\mbox{CL}(X)$ be  the hyperspace of all non-empty closed subsets of  a space  $X.$
 {\it Hit and miss} and {\it hit and far-miss}  topologies on $\mbox{CL}(X)$  are obtained by the join of two halves. Well-known examples are Vietoris topology~\cite{Vietoris1921,Vietoris1922,Vietoris1923,Vietoris1927} (see, also,~\cite{Beer1993,Beer1993hyperspace,DiConcilio2013action,DiConcilio2000SetOpen,DiConcilio2000PartialMaps,DiConcilio1989,DiMaio1992hypertop,Guadagni2015,Naimpally2002,Som2006hypertopology,Som2013aps}) and Fell topology~\cite{Fell1962HausdorfTop,Guadagni2015,Beer1993}.  In this article, we concentrate on an extension of Vietoris based on the strongly far proximity. \\
 
\noindent \underline{\emph{Vietoris topology}}\\
\smallskip
Let $X$ be an Hausdorff space. The \textit{Vietoris topology} on $\mbox{CL}(X)$ has as subbase all sets of the form
\begin{itemize}
\item $V^- = \{E \in \mbox{CL}(X): E \cap V \neq \emptyset\}$, where $V$ is an open subset of $X$,
\item $W^+ = \{C \in \mbox{CL}(X): C \subset W \}$, where $W$ is an open subset of $X$.
\end{itemize}

The topology ${\tau_V}^-$ generated by the sets of the first form is called \textbf{hit part} because, in some sense, the closed sets in this family hit the open sets $V$. Instead, the topology 
${\tau_V}^+$ generated by the sets of the second form is called \textbf{miss part}, because the closed sets here miss the closed sets of the form $X \setminus W$.\\
The Vietoris topology is the join of the two part: $\tau_V = {\tau_V}^- \vee {\tau_V}^+$. It represents the prototype of hit and miss topologies.\\
The Vietoris topology was modified by Fell. He left the hit part unchanged and in the miss part, ${\tau_F}^+$ instead of taking all open sets $W$, he took only open subsets with compact complement.\\
\underline{\emph{Fell topology:}}  \ \  \  \  $\qquad \  \qquad  \ \ \   \tau_F = {\tau_V}^- \vee {\tau_F}^+$\\

It is possible to consider several generalizations. For example, instead of taking open subsets with compact complement, for the miss part we can look at subsets running in a family of closed sets $\mathscr{B}$. So we define the {\it hit and   miss topology on $\mbox{CL}(X)$ associated with} ${\mathscr{B}}$ as the topology generated by the join of the hit sets $ A^{-},$  where  $A$ runs over all  open subsets of $X$, with the miss sets $A^{+}$, where   $A$ is once again an open subset of $X,$ but more,  whose  complement runs in   $\mathscr{B}$.

Another kind of generalization concerns the substitution of the inclusion present in the miss part with a strong inclusion associated to a proximity. Namely, when the space $X$ carries a proximity $\delta,$ then a proximity variation of the miss part can be displayed by replacing the miss sets with {\it far-miss sets} \ $A^{++} :=  \{ \ E \in \mbox{CL}(X) : E \ll_{\delta}   A  \ \}.$

Also in this case we can consider $A$ with the complement running in a family $\mathscr{B}$ of closed subsets of $X$. Then the {\it hit and far-miss topology , $\tau_{\delta, \mathscr{B}}$, associated with $\mathscr{B}$ } is generated by the join of the hit sets  $A^{-},$   where $A$ is open,   with far-miss sets $A^{++},$ where the complement of $A$ is in $\mathscr{B}$.

Fell topology can be considered as well an example of hit and far-miss topology. In fact, in any EF-proximity, when a compact set is contained in an open set, it is also strongly contained.   

\setlength{\intextsep}{0pt}
\begin{wrapfigure}[8]{R}{0.45\textwidth}
\begin{minipage}{4.5 cm}
\begin{center}
\begin{pspicture}
 (0.0,1.5)(2.5,3.5)
\psframe[linecolor=black](-0.8,0.5)(4.5,3.0)
\pscircle[linecolor=black,linestyle=dotted,linewidth=0.05,dotsep=0.05,style=Tgray](0.58,1.55){0.88}
\pscircle[linecolor=black,linestyle=solid,linewidth=0.05,style=Tgreen](0.38,1.35){0.38}
\pscircle[linecolor=black,linestyle=dotted,linewidth=0.05,dotsep=0.05,style=Tgray](3.38,1.85){1.00}
\pscircle[linecolor=black,linestyle=solid,linewidth=0.05,style=Tgreen](3.68,1.65){0.48}
\rput(-0.5,2.8){\footnotesize  $\boldsymbol{X}$}
\rput(0.38,1.35){\footnotesize $\boldsymbol{A}$}
\rput(0.58,2.25){\footnotesize $\boldsymbol{C}$}
\rput(3.68,1.65){\footnotesize $\boldsymbol{B}$}
\rput(3.08,2.55){\footnotesize  $\boldsymbol{E}$}
\rput(0.28,0.00){\qquad\qquad\qquad\qquad\footnotesize 
                 $\boldsymbol{\mbox{Fig.}\ 3.1.\  A\ \stackrel{\not{\text{\normalsize$\delta$}}_e}{\text{\tiny$\doublevee$}}\ B}$}
 \end{pspicture}
\end{center}
\end{minipage}
\end{wrapfigure}
\setlength{\intextsep}{2pt}

\section{Main Results}
Results for the \emph{strongly far} proximity~\cite{Peters2015visibility} (see, also,~\cite{Peters2015VoronoiAMSJ,Peters2014KleePhelps,Peters2012notices}) are given in this section.  Let $X$ be a nonempty set and $\delta$ be a \emph{Lodato proximity} on $\mathscr{P}(X)$.

\begin{definition}
We say that $A$ and $B$ are $\delta-$strongly far and we write $\mathop{\not{\delta}}\limits_{\mbox{\tiny$\doublevee$}}$ if and only if $A \not\delta B$ and there exists a subset $C$ of $X$ such that $A \not\delta X \setminus C$ and $C \not\delta B$, that is the Efremovi\v c property holds on $A$ and $B$.
\end{definition}

\begin{example}
In the Fig. 3.1, let $X$ be a nonempty set endowed with the Euclidean metric proximity $\delta_e$ ($e$ for Euclidean), $C,E\subset X, A\subset C, B\subset E$.  Clearly, $A\ \stackrel{\not{\text{\normalsize$\delta$}}_e}{\text{\tiny$\doublevee$}}\ B$ ($A$ is strongly far from $B$), since $A \not\delta_e B$ so that  $A \not\delta_e X \setminus C$ and $C \not\delta_e B$.  Also observe that the Efremovi\v{c} property holds on $A$ and $B$. \qquad \textcolor{black}{$\blacksquare$}
\end{example}

Observe that $A \not \delta B$ does not imply $A \mathop{\not{\delta}}\limits_{\mbox{\tiny$\doublevee$}} B$. In fact, this is the case when the proximity $\delta$ is not an \emph{EF-proximity}. 

\begin{example}\label{Alex}
Let $(X, \tau)$ be a non-locally compact Tychonoff space. The \emph{Alexandroff proximity} is defined as follows: \ $A\ \delta_A\ B \Leftrightarrow \mbox{cl}A \cap \mbox{cl}B \neq \emptyset$ or both $\mbox{cl}A$ and $\mbox{cl}B$ are non-compact.  This proximity is a compatible Lodato proximity that is not an EF-proximity. So $A \not \delta_A B$ does not imply $A
\stackrel{\not{\text{\normalsize$\delta$}}_A}{\text{\tiny$\doublevee$}} 
B$. \qquad \textcolor{black}{$\blacksquare$}
\end{example}

\begin{theorem}
The relation $\mathop{\not{\delta}}\limits_{\mbox{\tiny$\doublevee$}}$
is a basic proximity.
\end{theorem}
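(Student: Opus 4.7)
The goal is to verify axioms P0)--P3) for the basic proximity whose associated ``far'' relation is $\mathop{\not{\delta}}\limits_{\mbox{\tiny$\doublevee$}}$. Equivalently, working directly with the farness relation, one must check: symmetry; that $A\mathop{\not{\delta}}\limits_{\mbox{\tiny$\doublevee$}} B$ whenever $A = \emptyset$ or $B = \emptyset$; that $A \cap B \neq \emptyset$ precludes $A\mathop{\not{\delta}}\limits_{\mbox{\tiny$\doublevee$}} B$; and the union rule $A\mathop{\not{\delta}}\limits_{\mbox{\tiny$\doublevee$}}(B \cup C) \Leftrightarrow A\mathop{\not{\delta}}\limits_{\mbox{\tiny$\doublevee$}} B$ and $A\mathop{\not{\delta}}\limits_{\mbox{\tiny$\doublevee$}} C$. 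The only tools allowed are P0)--P3) for the underlying Lodato proximity $\delta$; notably, the Efremovi\v{c} property is \emph{not} needed, consistent with the observation that $\mathop{\not{\delta}}\limits_{\mbox{\tiny$\doublevee$}}$ genuinely refines $\not\delta$ only when $\delta$ fails to be EF.

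Axioms P0)--P2) require only bookkeeping. For symmetry, if $C$ witnesses $A\mathop{\not{\delta}}\limits_{\mbox{\tiny$\doublevee$}} B$, then $X \setminus C$ witnesses $B\mathop{\not{\delta}}\limits_{\mbox{\tiny$\doublevee$}} A$, after invoking P0) for $\delta$ twice. For the empty-set case, if $A = \emptyset$ then $A \not\delta E$ for every $E$ by P1), so $C = \emptyset$ itself serves as a witness. For the intersection clause, $A \cap B \neq \emptyset$ yields $A\ \delta\ B$ by P2), which destroys the very first conjunct of the definition of $\mathop{\not{\delta}}\limits_{\mbox{\tiny$\doublevee$}}$.

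The substantive axiom is P3). The forward direction is immediate: any $D$ witnessing $A\mathop{\not{\delta}}\limits_{\mbox{\tiny$\doublevee$}}(B \cup C)$ satisfies $D \not\delta (B \cup C)$, which splits into $D \not\delta B$ and $D \not\delta C$ by P3) for $\delta$, so the same $D$ witnesses both $A\mathop{\not{\delta}}\limits_{\mbox{\tiny$\doublevee$}} B$ and $A\mathop{\not{\delta}}\limits_{\mbox{\tiny$\doublevee$}} C$. The reverse direction is the only step where a genuine choice must be made: given witnesses $D_1, D_2$ for $B$ and $C$ respectively, I plan to set $D := D_1 \cap D_2$, so that $X \setminus D = (X \setminus D_1) \cup (X \setminus D_2)$; the two hypotheses $A \not\delta X \setminus D_i$ then combine via P3) for $\delta$ to give $A \not\delta X \setminus D$. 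From $D \subseteq D_i$ and the monotonicity consequence of P3) (namely, $E \subseteq F$ and $F \not\delta G$ imply $E \not\delta G$, obtained by writing $F = E \cup (F \setminus E)$ and applying P3)), one obtains $D \not\delta B$ and $D \not\delta C$, and hence $D \not\delta (B \cup C)$ by P3) once more. Combined with $A \not\delta (B \cup C)$, itself immediate from $A \not\delta B$ and $A \not\delta C$ via P3), this confirms $A\mathop{\not{\delta}}\limits_{\mbox{\tiny$\doublevee$}}(B \cup C)$. The only non-bookkeeping ingredient of the whole argument is this intersection construction, which is the natural one because passing to complements sends intersections to unions.
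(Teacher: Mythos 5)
Your proof is correct: the paper's own ``proof'' is just the one-line assertion that the claim is immediate from the properties of $\delta$, and your verification (symmetry via the complementary witness $X\setminus C$, the trivial empty-set and intersection clauses, and the union axiom handled by intersecting the two witnesses and using the monotonicity consequence of P3) is precisely the routine argument that assertion alludes to. Nothing in your write-up deviates from or exceeds what the paper implicitly intends, so there is nothing further to reconcile.
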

\begin{proof}
Immediate from the properties of $\delta$.
\end{proof}

We can also view the concept of strong farness in many other ways. For example, let $A\stackrel{\not{\hat{\text{\normalsize$\delta$}}}}{\text{\tiny$\doublevee$}} B$, read $A$ \emph{$\hat{\delta}$-strongly far from} $B$, defined by
\[ A \stackrel{\not{\hat{\text{\normalsize$\delta$}}}}{\text{\tiny$\doublevee$}} B \Leftrightarrow \exists E, C \subset X : \ A \subset \mbox{int}(\mbox{cl}E), \ B \subset
\mbox{int}(\mbox{cl}C) \hbox{ and } \mbox{int}(\mbox{cl}E) \cap \mbox{int}(\mbox{cl}C)= \emptyset .\]
This form of strong farness is illustrated in Fig. 3.1.  The relation $\stackrel{\not{\hat{\text{\normalsize$\delta$}}}}{\text{\tiny$\doublevee$}}$ appears to be stronger than $\stackrel{\not{\text{\normalsize$\delta$}}}{\text{\tiny$\doublevee$}}$, but it is possible to observe the following relations.

\begin{theorem}
The relation $\stackrel{\not{\text{\normalsize$\delta$}}}{\text{\tiny$\doublevee$}}$ is stronger than $\stackrel{\not{\hat{\text{\normalsize$\delta$}}}}{\text{\tiny$\doublevee$}}$, that is $A \stackrel{\not{\text{\normalsize$\delta$}}}{\text{\tiny$\doublevee$}} B \Rightarrow A \stackrel{\not{\hat{\text{\normalsize$\delta$}}}}{\text{\tiny$\doublevee$}} B.$ 
\end{theorem}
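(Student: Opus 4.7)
The plan is to use the witness $C$ from the definition of $A \stackrel{\not{\delta}}{\doublevee} B$ to manufacture the two sets whose interiors-of-closures are required by the definition of $\stackrel{\not{\hat\delta}}{\doublevee}$. Specifically, I will set $E := \Int C$ and (reusing the letter) take the second witness to be $X\setminus \cl C$. These are automatically open and disjoint; the content of the proof is to show (i) they contain $A$ and $B$ respectively, and (ii) disjoint open sets have disjoint interiors-of-closures.

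First, I would verify the inclusions $A \subset \Int C$ and $B \subset X\setminus \cl C$. From $A \stackrel{\not{\delta}}{\doublevee} B$ we pick $C$ with $A\not\delta\,(X\setminus C)$ and $C\not\delta B$. By monotonicity of $\delta$ (an immediate consequence of P3), $\{a\}\not\delta(X\setminus C)$ for every $a\in A$; by the Lodato closure formula $\cl(X\setminus C)=\{x: \{x\}\,\delta\,(X\setminus C)\}$, this gives $a\notin\cl(X\setminus C)$, i.e.\ $A\subset X\setminus\cl(X\setminus C)=\Int C$. The symmetric argument with $C\not\delta B$ (and P0) yields $B\subset X\setminus \cl C$.

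Next, since $E=\Int C$ is open, $E\subset \Int(\cl E)$, so $A\subset \Int(\cl E)$; likewise $X\setminus\cl C$ is open, so $B\subset \Int\bigl(\cl(X\setminus\cl C)\bigr)$.

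The key step is to show $\Int(\cl E)\cap \Int\bigl(\cl(X\setminus\cl C)\bigr)=\emptyset$. I would prove the general topological fact: if $U,V\subset X$ are disjoint open sets, then $\Int(\cl U)\cap \Int(\cl V)=\emptyset$. If $x$ were a common point, then $x$ would have an open neighborhood $N$ with $N\subset \cl U\cap \cl V$. Because $N$ is open and lies in $\cl U$, it meets $U$; so $N\cap U$ is a nonempty open set contained in $\cl V$, and hence meets $V$, contradicting $U\cap V=\emptyset$. Applying this to the disjoint open sets $U=\Int C$ and $V=X\setminus\cl C$ finishes the proof. The only real obstacle is this last disjointness lemma, which is however a short point-set topology argument; the rest is bookkeeping with the Lodato axioms.
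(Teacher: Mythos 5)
Your proof is correct and follows essentially the same route as the paper: extract the witness $C$, show $A\subset\Int\,C$ and $B\subset X\setminus\cl\,C$ (the paper gets these inclusions via P4, you via monotonicity of $\delta$ and the closure formula), and then conclude that the associated regular-open sets are disjoint. The only difference is that you explicitly prove the lemma that disjoint open sets have disjoint interiors of closures, which makes the final step --- matching the definition of $\hat{\delta}$-strong farness, whose witnesses must be of the form $\Int(\cl\,E)$ --- tighter than the paper's own wording, which stops at the disjointness of $\Int(C)$ and $\Int(\cl(X\setminus C))$.
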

\begin{proof}
Suppose $A \stackrel{\not{\text{\normalsize$\delta$}}}{\text{\tiny$\doublevee$}} B$. This means that there exists a subset $C$ of $X$ such that $A \not\delta X \setminus C$ and $C \not\delta B$. By the Lodato property $P4)$ (see \cite{Lodato1962}), we obtain that $\mbox{cl}A \cap \mbox{cl}(X \setminus C)= \emptyset $ and $\mbox{cl}C \cap \mbox{cl}B =\emptyset$. So $\mbox{cl}A \subset \mbox{int}(C), \ \mbox{cl}B \subset \mbox{int}(\mbox{cl}(X\setminus C ))$ and $\mbox{int}(C) \cap \mbox{int}(\mbox{cl}(X\setminus C))= \emptyset$, that gives $A \stackrel{\not{\hat{\text{\normalsize$\delta$}}}}{\text{\tiny$\doublevee$}} B$.
\end{proof}

We now want to consider \emph{hit and far-miss topologies} related to $\delta$ and $\stackrel{\text{\normalsize$\delta$}}{\text{\tiny$\doublevee$}}$ on $\mbox{CL}(X)$, the hyperspace of nonempty closed subsets of $X$.

To this purpose, call $\tau_\delta$ the topology having as subbase the sets of the form:
\begin{itemize}
\item $V^- = \{E \in \mbox{CL}(X): E \cap V \neq \emptyset\}$, where $V$ is an open subset of $X$,
\item $A^{++} =  \{ \ E \in \mbox{CL}(X) : E \not\delta X\setminus  A  \ \}$, where $A$ is an open subset of $X$.
\end{itemize}

and $\tau_\doublevee$ the topology having as subbase the sets of the form:
\begin{itemize}
\item $V^- = \{E \in {\rm \mbox{CL}}(X): E \cap V \neq \emptyset\}$, where $V$ is an open subset of $X$,
\item $A_\doublevee =  \{ \ E \in \mbox{CL}(X) : E \stackrel{\not{\text{\normalsize$\delta$}}}{\text{\tiny$\doublevee$}} X\setminus  A  \ \}$, where $A$ is an open subset of $X$
\end{itemize}

It is straightforward to prove that these are admissible topologies on $\mbox{CL}(X)$.\\
The following results concern comparisons between them.   From this point forward, let $X$ be a T$_1$ topological space.

\begin{lemma}\label{lemma1}
Let $A, B, C \in \mbox{\rm CL}(X) $. If $A \not\delta B \Rightarrow A \mathop{\not{\delta}}\limits_{\mbox{\tiny$\doublevee$}} B$ for all $A \in \mbox{\rm CL}(X)$, then $C \subseteq B$. That is $(X \setminus B)^{++} \subseteq (X \setminus C)_\doublevee \Rightarrow C \subseteq B$.
\end{lemma}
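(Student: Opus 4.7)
Reading the hypothesis through the ``That is'' reformulation, the premise asserts that for every $E\in\mbox{CL}(X)$, $E\not\delta B$ implies $E\stackrel{\not{\text{\normalsize$\delta$}}}{\mbox{\tiny$\doublevee$}}C$. My plan is to prove the contrapositive of the desired conclusion $C\subseteq B$: assume $C\not\subseteq B$ and produce a single closed set $E$ that lies in $(X\setminus B)^{++}$ but not in $(X\setminus C)_\doublevee$.

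The natural candidate is a singleton. I pick $x\in C\setminus B$ and take $E:=\{x\}$, which lies in $\mbox{CL}(X)$ by the $T_1$ assumption on $X$. Two checks then remain. For $E\in(X\setminus B)^{++}$: since $B$ is closed, the closure description $\mbox{cl}\,B=\{y:\{y\}\,\delta\,B\}$ together with $x\notin B$ gives $\{x\}\not\delta B$. For $E\notin(X\setminus C)_\doublevee$: we have $x\in\{x\}\cap C$, so axiom P2 forces $\{x\}\,\delta\,C$, and in particular $\{x\}$ fails even the $\not\delta$ component of strong farness from $C$.

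The only point that needs genuine care is the bookkeeping built into the definitions of $A^{++}$ and $A_\doublevee$: the complementation in those subbase elements flips the roles of $B$ and $C$, so that the containment $(X\setminus B)^{++}\subseteq(X\setminus C)_\doublevee$ really compares $B$ to $C$ directly rather than to their complements. Once that is unwound, the $T_1$ singleton together with axiom P2 closes the argument, and no appeal to the stronger Efremovi\v c-type machinery from the preceding theorems is required.
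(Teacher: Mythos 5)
Your proof is correct and is essentially the paper's own argument: the paper likewise takes $x \in C \setminus B$, observes $\{x\} \not\delta B$ (since $B$ is closed) while $\{x\}\ \delta\ C$ by P2, so $\{x\}$ cannot be strongly far from $C$; your contrapositive phrasing versus the paper's proof by contradiction is an immaterial difference. The additional bookkeeping you supply (the T$_1$ hypothesis making $\{x\}$ closed, and unwinding $(X\setminus B)^{++}$ and $(X\setminus C)_\doublevee$) only makes explicit what the paper leaves implicit.
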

\begin{proof}
By contradiction, suppose $C \not\subseteq B$. Then there exists $x \in C: x \not\in B$. So $x \not\delta B$ but $x \mathop{\delta}\limits_{\mbox{\tiny$\doublevee$}} C$, which is absurd.
\end{proof}

\begin{lemma}\label{lemma2}
Let $\delta= \delta_A$, the Alexandroff proximity on a non-locally compact Tychonoff space, and let $H$ and $E$ be open subsets of $X$. Then $H_W \subseteq E^{++} \Leftrightarrow H \subseteq E$.
\end{lemma}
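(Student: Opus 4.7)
The plan is to establish the two directions of the biconditional separately. The implication ($\Leftarrow$) is the easy one and uses nothing special about the Alexandroff proximity: assuming $H \subseteq E$ gives $X \setminus E \subseteq X \setminus H$, and because $\not\delta_A$ is antimonotone in its second argument (a direct consequence of P3), every $F \in H_W$ satisfies $F \stackrel{\not\delta_A}{\doublevee} (X \setminus H)$, hence in particular $F \not\delta_A (X \setminus H)$, and therefore $F \not\delta_A (X \setminus E)$. That places $F$ in $E^{++}$ and gives $H_W \subseteq E^{++}$.

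For ($\Rightarrow$) I would argue by contrapositive. Suppose $H \not\subseteq E$ and fix $x \in H \setminus E$. The goal is to exhibit a closed set $F$ with $x \in F$ that belongs to $H_W$, because the assumed inclusion $H_W \subseteq E^{++}$ would then force $F \in E^{++}$, while the point $x \in F \cap (X \setminus E)$ makes $F \, \delta_A \, (X \setminus E)$ and delivers a contradiction. The natural candidate is the singleton $F = \{x\}$, which is closed by the T$_1$ hypothesis. To witness the strong farness $\{x\} \stackrel{\not\delta_A}{\doublevee} (X \setminus H)$, I would invoke the Tychonoff property to produce a continuous $f \colon X \to [0,1]$ with $f(x) = 1$ and $f \equiv 0$ on $X \setminus H$, and let $C = f^{-1}([1/2,1])$. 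Then $C$ is closed with $x \in \mbox{int}(C) \subseteq C \subseteq H$, so $C$ serves as the intermediate set required by the definition.

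The delicate step, and the one where I expect the non-local compactness hypothesis to actually bite, is the compactness clause built into the Alexandroff proximity: $A \not\delta_A B$ demands that one of $\mbox{cl}(A)$ and $\mbox{cl}(B)$ be compact. The two conditions involving the singleton, $\{x\} \not\delta_A (X \setminus H)$ and $\{x\} \not\delta_A (X \setminus C)$, are automatic since $\{x\}$ is compact, but the remaining condition $C \not\delta_A (X \setminus H)$ insists that $\mbox{cl}(C)$ or $X \setminus H$ be compact. Making that hold in a non-locally compact ambient space is the main obstacle: either $C$ must be chosen so that its closure is compact (exploiting local compactness at the particular point $x$, which may be available even when the global hypothesis fails), or $F$ must be chosen as a more elaborate closed set than $\{x\}$ so that the compactness clause is fulfilled by $F$ itself. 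Once this compactness issue is dispatched, the singleton-based contradiction closes the argument.
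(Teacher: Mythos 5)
Your ``$\Leftarrow$'' half is fine and is essentially the paper's. The ``$\Rightarrow$'' half, however, stalls at exactly the decisive point, and the gap you flag cannot be ``dispatched'' in the form you hope. The unproven clause is $C \not\delta_A (X\setminus H)$, whose compactness requirement is that $\mbox{cl}\,C$ or $\mbox{cl}(X\setminus H)$ be compact, and neither of your escape routes works in the setting the paper actually uses. The theorem that invokes this lemma runs in $X=\mathbb{Q}$, which is locally compact at no point: any $C$ with $\mbox{cl}\,C$ compact has $\mbox{int}\,C\subseteq\mbox{int}(\mbox{cl}\,C)=\emptyset$, so no such $C$ can contain $x$ in its interior. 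Replacing the singleton $F=\{x\}$ by a ``more elaborate'' closed set is irrelevant, because the problematic clause does not mention $F$ at all --- it is a condition on $C$ and $X\setminus H$ alone. Worse, when $\mbox{cl}(X\setminus H)$ is non-compact the intermediate set $C$ is forced to have compact closure, and then $F\not\delta_A X\setminus C$ forces $\mbox{cl}\,F\subseteq\mbox{int}\,C=\emptyset$; hence $H_\doublevee=\emptyset$, so \emph{no} closed set can produce your contradiction, and the implication $H_\doublevee\subseteq E^{++}\Rightarrow H\subseteq E$ then holds vacuously on the left while failing on the right. In other words, the ``$\Rightarrow$'' direction is only salvageable under the additional hypothesis that $X\setminus H$ (or its closure) is compact; the missing idea is that the compactness clause must be supplied by $X\setminus H$, not by $\mbox{cl}\,C$.

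For comparison, the paper's own proof of ``$\Rightarrow$'' tacitly builds in that hypothesis: it ``chooses'' $X\setminus H$ compact and $X\setminus E$ non-compact, takes a non-compact closed $B\in H_\doublevee$, and concludes $B\ \delta_A\ X\setminus E$ because two closed sets with non-compact closures are always Alexandroff-near --- a different mechanism from your point-based one (nearness by non-compactness rather than nearness by intersection). Under that same assumption your argument does close, and rather more cleanly: Tychonoff regularity gives $C$ with $x\in\mbox{int}\,C$ and $\mbox{cl}\,C\subseteq H$, the clause $C\not\delta_A X\setminus H$ holds because $X\setminus H$ is compact, and then $\{x\}\in H_\doublevee$ while $\{x\}\ \delta_A\ X\setminus E$ since $x\in X\setminus E$. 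But as written, your proposal leaves the essential step open, and in the generality you state it that step is not merely hard --- it is false.
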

\begin{proof}
$"\Rightarrow"$. By contradiction, suppose that $H \not\subseteq E$. Then we can choose $X\setminus H$ as compact subset and $X \setminus E$ non-compact. Take another closed subset $B$ non compact and suppose $B \stackrel{\not{\text{\normalsize$\delta$}}_A}{\text{\tiny$\doublevee$}} X \setminus H$. So there exists $D: B \not{\delta}_A X \setminus D$ and $D \not{\delta}_A X \setminus H$, and this is compatible with the previous choices. But $B\ \delta_A\ X \setminus E$, being both non-compact sets.\\
$"\Leftarrow"$. For any $B \in \mbox{CL}(X)$, $B \stackrel{\not{\text{\normalsize$\delta$}}_A}{\text{\tiny$\doublevee$}} X \setminus H \Rightarrow B \stackrel{\not{\text{\normalsize$\delta$}}_A}{\text{\tiny$\doublevee$}} X \setminus E \Rightarrow B \not\delta X \setminus E. $
\end{proof}

Now let $\tau_\delta^{++}$ be the hypertopology having as subbase the sets of the form $A^{++}$, where $A$ is an open subset of $X$, and let $\tau_\doublevee^+$ the hypertopology having as subbase the sets of the form $A_\doublevee$, again with $A$ an open subset of $X$.

\begin{theorem}
The hypertopologies $\tau_\delta^{++}$ and $\tau_\doublevee^+$ are not comparable.
\end{theorem}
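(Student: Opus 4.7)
The plan is to exhibit, in each inclusion direction, a subbasic open set of one hypertopology that fails to be open in the other, working in the non-EF setting of Lemma~\ref{lemma2} (the Alexandroff proximity $\delta_A$ on a non-locally compact Tychonoff space $X$).

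For $\tau_\delta^{++} \not\subseteq \tau_\doublevee^+$, I would pick an open $U \subsetneq X$ with $X \setminus U$ compact, so that $U^{++}$ contains closed non-compact $F \subseteq U$. The non-EF failure of $\delta_A$ then obstructs strong-farness at such $F$: having $F \in V_\doublevee$ for an open $V \subsetneq X$ would demand a buffer $C$ with $\mbox{cl}(X \setminus C)$ compact and $X \setminus C$ containing a neighborhood of the non-empty closed set $X \setminus V$, which is impossible when $X$ is non-locally compact. Hence every $\tau_\doublevee^+$-basic neighborhood of $F$ is forced to equal $\mbox{CL}(X)$, which is not contained in the proper set $U^{++}$, so $U^{++}$ is not $\tau_\doublevee^+$-open.

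For $\tau_\doublevee^+ \not\subseteq \tau_\delta^{++}$, I would choose open $H$ and $E$ with $X \setminus H$ compact, $X \setminus E$ non-compact, and $H \not\subseteq E$. By Lemma~\ref{lemma2} there is a closed $B \in H_\doublevee \setminus E^{++}$. Assume for contradiction $H_\doublevee \in \tau_\delta^{++}$; then there is a basic neighborhood $\bigcap_{i=1}^{n} U_i^{++} \subseteq H_\doublevee$ containing $B$. Lemma~\ref{lemma1} tells us that any single $U_i^{++} \subseteq H_\doublevee$ already forces $U_i \subseteq H$. The plan is then to construct a closed set $B' \in \bigcap_i U_i^{++} \setminus H_\doublevee$ by adjoining to $B$ a point of $X \setminus H$ chosen inside $\bigcap_i U_i$. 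Adjoining such a point breaks strong-farness from $X \setminus H$ (since $B' \cap (X \setminus H) \neq \emptyset$, whence $B' \, \delta_A \, X \setminus H$) while preserving each single far-condition $B' \not\delta_A X \setminus U_i$ by the Alexandroff compactness bookkeeping, contradicting $\bigcap U_i^{++} \subseteq H_\doublevee$.

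The main obstacle is bridging single-subbasic inclusions (cleanly handled by Lemmas~\ref{lemma1} and~\ref{lemma2}) to the finite-intersection setting of basic neighborhoods, and in particular verifying that the witness point of $X \setminus H$ can always be selected inside $\bigcap_i U_i$. The Alexandroff compactness dichotomy — each $U^{++}$ imposes set-containment, supplemented by compactness when $X \setminus U$ is non-compact — provides the structural control needed to run this construction and to rule out the opposite inclusion by the non-existence of a compact buffer in non-locally compact $X$.
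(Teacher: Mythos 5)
Your first half (showing $\tau_\delta^{++}\not\subseteq\tau_\doublevee^+$) is essentially workable, but only after you fix a concrete space in which \emph{no} point has a compact neighbourhood, e.g.\ $X=\mathbb{Q}$ as in the paper: your key step needs that the nonempty closed set $X\setminus V$ never has a neighbourhood with compact closure, and ``non-locally compact'' in the sense of failing local compactness at some point does not give this. With $X=\mathbb{Q}$ the argument is correct, and it is a genuinely different (and cleaner) route than the paper's, which instead picks $A$ and $X\setminus E$ violating the EF-property and invokes Lemma~\ref{lemma2}; your version has the advantage of disposing of finite intersections automatically, since every subbasic $V_\doublevee$ containing a non-compact $F$ is forced to be all of $\mbox{CL}(X)$.

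The genuine gap is in the other direction, and it is exactly at the point you flag as ``the main obstacle'' --- with your choices it cannot be bridged, because the statement you assume for contradiction is in fact true. Take $X=\mathbb{Q}$, $\delta=\delta_A$, and $H$ open with $X\setminus H$ compact and nonempty. Then $H_\doublevee$ is precisely the family of \emph{compact} closed subsets of $H$: a non-compact closed $B\subseteq H$ would need a witness $C$ with $\mbox{cl}(X\setminus C)$ compact and $\mbox{cl}C\subseteq H$, which makes the nonempty open set $X\setminus \mbox{cl}C$ have compact closure, impossible in $\mathbb{Q}$. But this $H_\doublevee$ \emph{is} $\tau_\delta^{++}$-open: given compact $B\subseteq H$, choose an open $U\supseteq B$ whose complement is non-compact (say $U=(-N,N)\cap\mathbb{Q}$ with $N$ large); then $H^{++}\cap U^{++}$ consists exactly of the compact closed subsets of $H\cap U$, and each such $D$ lies in $H_\doublevee$ because one can find $C$ (a finite union of clopen rational intervals with irrational endpoints) with $D\subseteq\mbox{int}(C)$ and $\mbox{cl}C\subseteq H$. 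Hence $B\in H^{++}\cap U^{++}\subseteq H_\doublevee$, so no basic neighbourhood $\bigcap_i U_i^{++}$ need contain a point of $X\setminus H$ in $\bigcap_i U_i$ (here $\bigcap_i U_i=H\cap U\subseteq H$), and your planned witness $B'=B\cup\{x\}$ does not exist. Note also that Lemma~\ref{lemma1} only gives $U_i\subseteq H$ when a \emph{single} $U_i^{++}$ is contained in $H_\doublevee$, which need not hold for the factors of an intersection, so it cannot be used as you propose. The paper runs this direction in the opposite regime --- $H$ open with $\mbox{cl}(X\setminus H)$ non-compact --- and locates the obstruction in the impossibility of interpolating $\mbox{cl}A\subseteq K\subseteq\mbox{cl}K\subseteq H$ with $\mbox{cl}K$ compact in the nowhere locally compact $\mathbb{Q}$; any repair of your argument has to move to that setting rather than the compact-complement one.
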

\begin{proof}
First we want to prove that, in general, $\tau_\doublevee^+ \not\subset \tau_\delta^{++}$. Consider the space of rational numbers $X = \mathbb{Q} $ and the \emph{ Alexandroff proximity} $\delta_A$ (see example \ref{Alex}). Let $H$ be an open subset of $X$ with $cl(X\setminus H)$ non-compact and suppose $E \in H_\doublevee $, with $E \in \mbox{CL}(X)$. We ask if there exists a $\tau_\delta^{++}-$open set, $K^{++}$, such that $E \in K^{++} \subseteq H_\doublevee $. We have two cases: $\mbox{cl}(X \setminus K)$ compact or not. First, suppose $\mbox{cl}(X \setminus K)$ compact and $A \in K^{++}$ with $\mbox{cl}A$ non-compact. Then it must be $\mbox{cl}A \cap \mbox{cl}(X\setminus K)= \emptyset$. But $A \stackrel{\text{\normalsize$\delta$}_A}{\text{\tiny$\doublevee$}} X \setminus H$, because for all $
D, \ A\ \delta_A\ X\setminus D$ or $D\ \delta_A\ X \setminus H$. In fact if $\mbox{cl}D$ is compact, then $\mbox{cl}(X\setminus D)$ is not compact. So either both $\mbox{cl}A$ and $\mbox{cl}(X\setminus D)$ are non-compact, or both $\mbox{cl}D$ and $\mbox{cl}(X\setminus H)$ are non-compact. Instead, suppose $\mbox{cl}(X \setminus K)$ non-compact. So, being $A \not\delta_A X \setminus K$, we have $\mbox{cl}A$ compact and $\mbox{cl}A \cap \mbox{cl}(X \setminus K)=\emptyset$. To obtain $A \stackrel{\not{\text{\normalsize$\delta$}}_A}{\text{\tiny$\doublevee$}} X\setminus H$, by lemma \ref{lemma1} we should have $K \subseteq H$. So we need a set $K$ such that $\mbox{cl}A \subseteq K \subseteq H$ and more with $\mbox{cl}K$ compact and $\mbox{cl}A \subseteq K \subseteq \mbox{cl}K \subseteq H$. But we are in a non-locally compact space, so it could be not possible.

Conversely, we want to prove that $\tau_\delta^{++} \not\subset \tau_\doublevee^+  $. Consider again the space of rational numbers $X = \mathbb{Q} $ and the \emph{ Alexandroff proximity} $\delta_A$. Take $E^{++} \in \tau_\delta^{++} $ and $A \in E^{++}$, with $E$ open subset of $X$. To identify a $\tau_\doublevee^+$-open set, $H_\doublevee$, such that $A \in H_\doublevee \subset E^{++}$, by lemma \ref{lemma2}, we need $H \subseteq E$. But we can choose $A$ and $X\setminus E$ in such a way that EF-property does not hold. So EF-property does not hold either for $A$ and $X \setminus H$, for each $H \subset E$. Hence $A$ cannot belong to any $H_\doublevee$ included in $ E^{++}$.
\end{proof}


\end{document}